\documentclass[11pt]{article}
\usepackage{amsthm,amsfonts,amssymb,amsmath}
\usepackage{stmaryrd}
\usepackage[pdftex,colorlinks,backref=page,citecolor=blue,bookmarks=false]{hyperref}
\usepackage{cleveref}
\usepackage{epsfig}
\usepackage{url}
\usepackage{xcolor}
\usepackage{multicol,graphicx}
\usepackage{fullpage}
\usepackage{thm-restate}
\usepackage{tocloft}
\usepackage{tikz, tikz-3dplot}
\usetikzlibrary{decorations.pathreplacing,calligraphy, patterns}
\usepackage[square,sort,comma,numbers]{natbib} 
\usepackage{setspace}
\usepackage[shortlabels]{enumitem}

\numberwithin{equation}{section}

\newtheorem{thm}[equation]{Theorem}

\newtheorem{lemma}[equation]{Lemma}
\newtheorem{prop}[equation]{Proposition}

\crefname{thm}{Theorem}{Theorems}
\crefname{cor}{Corollary}{Corollaries}
\crefname{lemma}{Lemma}{Lemmas}
\crefname{prop}{Proposition}{Propositions}
\crefname{claim}{Claim}{Claims}
\crefname{obs}{Observation}{Observations}
\crefname{fact}{Fact}{Facts}
\crefname{conj}{Conjecture}{Conjectures}
\crefname{ques}{Question}{Questions}
\crefname{prob}{Problem}{Problems}

\theoremstyle{definition}

\newtheorem*{ack}{Acknowledgements}

\theoremstyle{remark}

\newcommand{\mc}{\mathcal}

\newcommand{\beq}{\begin{equation*}}
\newcommand{\eeq}{\end{equation*}}

\title{Large Independent Sets in Flag Spheres}
\author{Varun Shah \footnotemark[1]
}

\date{}

\begin{document}

\maketitle
\begin{abstract}
	\setlength{\parskip}{\medskipamount}
    \setlength{\parindent}{0pt}
    \noindent

	For every $d \geq 4$, we construct a family of $(d-1)$-dimensional flag simplicial spheres $\mc K_n$ whose graphs contain independent sets of size asymptotically equal to the number of vertices. More precisely, we prove that for all sufficiently large $n$,
    \beq \alpha(G(\mc K_n)) \geq f_0(\mc K_n) - \frac{Cf_0(\mc K_n)}{\left(\log f_0(\mc K_n)\right)^{\lfloor d/2 \rfloor-1}}, \eeq
    where $C = C(d) > 0$. This disproves a recent conjecture of Chudnovsky and Nevo \cite{nevo}. 
\end{abstract}

\renewcommand{\thefootnote}{\fnsymbol{footnote}} 

\footnotetext[1]{Department of Mathematics, University of Washington, Seattle, WA 98195, USA. Email: 
  \textsf{\href{mailto:varunsh@uw.edu}{varunsh@uw.edu}.
           }}

\section{Introduction}
A simplicial complex $\mc K$ is \emph{flag} provided every minimal non-face of $\mc K$ has two vertices. The condition of flagness often imposes strong combinatorial restrictions on the structure of simplicial spheres. For instance, the boundary of the $d$-simplex has $\binom{d+1}{i+1}$ faces of dimension $i$, the smallest among all $(d-1)$-spheres. In contrast, Meshulam \cite{meshulam} proved that every flag simplicial $(d-1)$-sphere must satisfy $f_i \geq 2^{i+1}\binom{d}{i+1}$ for all $i$, with equality achieved by the boundary of the $d$-dimensional cross-polytope.

Another interesting comparison concerns the graphs of these spheres. A connected graph is said to be \emph{$k$-connected} if it has more than $k$ vertices and remains connected whenever fewer than $k$ vertices are removed. The \emph{connectivity} of a graph is the largest $k$ for which the graph is $k$-connected. A classical result of Balinski \cite{balinski}, later extended by Barnette \cite{barnette}, states that graphs of $(d-1)$-spheres are $d$-connected, while Athanasiadis \cite{athanasiadis} proved that the graph of every flag $(d-1)$-sphere has connectivity at least $2d-2$.

In this paper, we study the independence numbers of these graphs. The \emph{independence number} $\alpha(G)$ of a graph $G$ is the size of its largest independent set. Chudnovsky and Nevo \cite[Conjecture~4.1]{nevo} conjectured that if $\mc K$ is a flag $(d-1)$-sphere, then its graph $G(\mc K)$ satisfies 
\begin{equation} \label{eq: main.eq}
    \alpha(G(\mc K)) \leq \left\lfloor\frac{f_0(\mc K)-2(d-2)}{2}\right\rfloor.
\end{equation}
The conjecture is immediate when $d=2$ and was established by the authors of \cite{nevo} for $d=3$; however, their arguments rely essentially on planarity. They also note that equality in \eqref{eq: main.eq} is attained by the $(d-1)$-sphere obtained by suspending an $(n-2(d-2))$-gon $d-2$ times. 

If the spheres are not required to be flag, then the situation is seemingly different. The number of facets $m$ of a neighborly simplicial $(d-1)$-sphere with $n$ vertices (such as the boundary of the cyclic polytope $C(n, d)$) is on the order of $n^{\lfloor d/2\rfloor}$; when $d \geq 4$, $n/m$ approaches $0$ as $n \to \infty$. Performing stellar subdivisions at each facet produces a simplicial sphere with $n + m \sim m$ vertices, while the $m$ added vertices form an independent set. This shows that there exist simplicial spheres with independent sets of size asymptotically equal to the number of vertices. Since stellar subdivisions do not preserve flagness, however, this construction does not apply in the flag setting.

In this paper, we disprove the conjecture of Chudnovsky and Nevo for all $d \geq 4$ by establishing the following result.
\begin{thm} \label{thm: main.thm}
    There exists a constant $C = C(d)>0$ and a family of $(d-1)$-dimensional flag spheres $\mc K_n$ satisfying $f_0(\mc K_n) \to \infty$ as $n \to \infty$, and
    \beq \alpha(G(\mc K_n)) \geq f_0(\mc K_n) - \frac{Cf_0(\mc K_n)}{(\log f_0(\mc K_n))^{\lfloor d/2 \rfloor - 1}}, \eeq
    for all sufficiently large $n$. 

    When $d \geq 4$, taking $n \to \infty$ gives
    \beq \lim\limits_{n \to \infty} \frac{\alpha(G(\mc K_n))}{f_0(\mc K_n)} = 1. \eeq
\end{thm}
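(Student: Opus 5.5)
The plan is to start from a flag sphere that has few vertices but many pairwise "far apart" edges, and then perform edge subdivisions, which preserve flagness. Each subdivision adds one new vertex, and if the chosen edges are pairwise far apart the new vertices form an independent set. The polynomial rate this gives is stronger than the logarithmic rate in \Cref{thm: main.thm}, so the theorem follows a fortiori. For $d=2,3$ the exponent $\lfloor d/2\rfloor-1$ is $0$, so the inequality with $C=1$ is trivial. I therefore assume $d\ge 4$ and put $k=\lfloor d/2\rfloor\ge 2$.

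\textbf{Base sphere.} Let $P_m^{(1)},\dots,P_m^{(k)}$ be $m$-gons with vertices $a^{(r)}_0,\dots,a^{(r)}_{m-1}$ (indices mod $m$), and set $\mc L=P_m^{(1)}*\cdots*P_m^{(k)}$. If $d$ is odd, suspend once more. Joins of flag complexes are flag and joins of spheres are spheres, so $\mc L$ is a flag $(d-1)$-sphere with $f_0(\mc L)=km$ or $km+2$.

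\textbf{Edges to subdivide.} Consider the cross edges $e_{ij}=a^{(1)}_i a^{(2)}_j$ with $i,j$ both even. In a join, the faces containing $e_{ij}$ are exactly the faces of $\{a^{(1)}_{i-1},a^{(1)}_i,a^{(1)}_{i+1}\}*\{a^{(2)}_{j-1},a^{(2)}_j,a^{(2)}_{j+1}\}*(\text{rest})$ that contain $e_{ij}$. Hence
\[\mathrm{lk}(e_{ij})=\{a^{(1)}_{i\pm1}\}*\{a^{(2)}_{j\pm1}\}*P^{(3)}*\cdots,\]
and two such edges $e_{ij}\ne e_{i'j'}$ lie in a common face only if $|i-i'|\le1$ and $|j-j'|\le1$ cyclically. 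With $i,j,i',j'$ all even and $m$ even, this forces $e_{ij}=e_{i'j'}$. So the $m^2/4$ edges in $M=\{e_{ij}: i,j \text{ even}\}$ pairwise share no face.

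\textbf{Subdivide and check independence.} Subdivide the edges of $M$ one at a time. Edge subdivision of $uv$ replaces the star of $uv$ by $w*\partial(uv)*\mathrm{lk}(uv)$. It preserves the PL-sphere property, and it preserves flagness: the new vertex's link is the suspension of $\mathrm{lk}(uv)$, which is flag, and no new minimal non-faces of size $\ge3$ arise because $u,v$ become non-adjacent. This is standard, but I would verify it explicitly. Subdividing $e\in M$ changes only faces containing $e$. Since no other edge $e'\in M$ lies in a common face with $e$, the star of $e'$ is untouched, so the links of later edges in $M$ are unchanged. Consequently each new vertex $w_{ij}$ is adjacent only to $a^{(1)}_i, a^{(2)}_j$ and the original vertices of $\mathrm{lk}(e_{ij})$, never to another new vertex. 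So $I=\{w_{ij}\}$ is independent in the final sphere $\mc K_m$. We have $f_0(\mc K_m)=N:=m^2/4+O(m)$ and $\alpha\ge m^2/4$, giving
\[\alpha(G(\mc K_m))\ \ge\ N-O(\sqrt N)\ \ge\ N-\frac{CN}{(\log N)^{k-1}}\]
for large $N$. The limit statement follows immediately.

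\textbf{Main obstacle.} The delicate step is the bookkeeping of sequential subdivisions: after earlier subdivisions, the star and link of each remaining edge of $M$ must still be as in $\mc L$, and flagness must persist at every step. I would handle this by proving a small lemma: if $e,e'$ lie in no common face of a flag sphere $\mc K$, then after subdividing $e$ the edge $e'$ still lies in no common face with any edge of $M$, and its star is unchanged. Together with the flagness of edge subdivisions, an induction on the number of subdivided edges then completes the argument.
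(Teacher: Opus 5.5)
Your argument is correct, and it takes a genuinely different and quantitatively stronger route than the paper.

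\textbf{The paper's route.} It starts from a neighborly cubical $(d-1)$-sphere $\mc P_n$ whose graph is $Q_n$. Via Lemma~\ref{lem: subcubes} it realizes $\mc P_n$ as a subcomplex of $[0,1]^n$. It then replaces every cubical facet by the cone over the staircase triangulation of that facet's boundary. Compatibility and flagness come from the Boolean-lattice chain structure, and the independent set is the $f_{d-1}(\mc P_n)$ cone points. The rate comes from the cubical Dehn--Sommerville estimate \eqref{eq: facet.bound}, $f_{d-1}(\mc P_n)\asymp 2^n n^{\lfloor d/2\rfloor-1}$. That is why its error term is only polylogarithmic.

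\textbf{Your route.} You use two facts instead:
\begin{enumerate}[label=(\alph*)]
\item a flag sphere can have quadratically many edges ($C_m*C_m$ has $m^2$ edges on $2m$ vertices);
\item edge subdivisions, unlike subdivisions of higher faces, preserve flagness.
\end{enumerate}
Choosing $m^2/4$ edges, no two of which lie in a common face, makes the new vertices pairwise non-adjacent.

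\textbf{The checks you flagged all go through.}
\begin{enumerate}[label=(\roman*)]
\item After subdividing $e$, every new face has the form $w\cup\tau$ with $\tau\cup e$ an old face. So a new face containing some other $e'\in M$ would give an old face containing both $e$ and $e'$, which does not exist. Hence the stars of the remaining edges of $M$ are untouched, and the ``no common face'' property persists by induction.
\item Flagness of an edge subdivision is a short clique argument. Every vertex of $\mathrm{lk}(uv)$ is adjacent to both $u$ and $v$, and no edges among old vertices are created. So a clique through $w$ extends, after adding $u$ and $v$, to an old clique, hence to an old face containing $uv$.
\end{enumerate}

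\textbf{What each approach buys.} Yours is self-contained: no neighborly cubical spheres, no Dehn--Sommerville. It gives $\alpha(G(\mc K))\ge f_0-O\bigl(\sqrt{f_0}\bigr)$ for every $d\ge 4$, far better than the paper's $f_0-O\bigl(f_0/(\log f_0)^{\lfloor d/2\rfloor-1}\bigr)$. It also produces explicit small counterexamples to the conjecture: for $d=4$ and $m=6$ you get $21$ vertices with $\alpha\ge 9>8$. The paper's approach mainly buys a conceptual parallel with the classical ``subdivide every facet of a neighborly sphere'' construction.

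\textbf{Loose ends.} Both are trivial. For $d=2,3$ you still need to name a family with $f_0\to\infty$, for example polygons and their suspensions. You should also reindex your family by $n$ with $m=2n$ to match the statement.
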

If true, \eqref{eq: main.eq} would imply that $\alpha(G(\mc K)) \leq \left\lfloor\frac{f_0(\mc K)}{2}\right\rfloor$ for all flag $(d-1)$-spheres $\mc K$, and consequently the above limit could not exceed $1/2$. So Theorem~\ref{thm: main.thm} provides a counterexample to \eqref{eq: main.eq} for every $d \geq 4$. 

Let us briefly describe our construction. Our starting point is the construction of (not necessarily flag) spheres with large independence number discussed above. We replace the simplicial spheres in that construction by cubical spheres with $n$ vertices and $m$ facets, where $n/m \to 0$ as $n \to \infty$. We then perform an operation resembling a stellar subdivision: each cubical facet is triangulated after adding a new vertex in its interior. This produces a flag sphere with $m+n \sim m$ vertices, while the $m$ new vertices form an independent set.

\textbf{Organization of the paper.} In Section~\ref{sec: bg}, we introduce necessary background on simplicial and cubical complexes, which are the main objects we use in this paper. In Section~\ref{sec: cubes}, we describe a triangulation of cubes, which is then used in Section~\ref{sec: main} to prove Theorem~\ref{thm: main.thm}.

\section{Background} \label{sec: bg}

\subsection{Simplicial Complexes} \label{sec: bgflag}

A \emph{simplicial complex} $\mc K$ on a vertex set $V = V(\mc K)$ is a collection of subsets of $V$ such that $\{v\} \in \mc K$ for all $v \in V$, and $F \subseteq G \in \mc K$ implies $F \in \mc K$. Elements of $\mc K$ are called \emph{faces}, and maximal faces (with respect to inclusion) are called \emph{facets}. The \emph{dimension} of a face $F$ is $|F|-1$, and the dimension of $\mc K$ is the largest dimension of any face of $\mc K$. If every facet of $\mc K$ has the same dimension, then $\mc K$ is said to be \emph{pure}. Moreover, $0$-dimensional faces are called \emph{vertices} and $1$-dimensional faces are called \emph{edges}. For all $k \geq 0$, we denote by $f_k(\mc K)$ the number of $k$-dimensional faces of $\mc K$. 

We often omit the usual set notation for faces when no ambiguity can arise. For example, if $\{x\}$ is a vertex of $\mc K$, we denote it by $x$, and if $\{x,y,z\}$ is a face of $\mc K$, we denote it by $xyz$. 

Let $k\geq 0$. The \emph{$k$-skeleton} of $\mc K$ is the simplicial complex consisting of all faces of $\mc K$ of dimension at most $k$. We refer to the $1$-skeleton of $\mc K$ as the \emph{graph} of $\mc K$ and denote it by $G(\mc K)$.

Every simplicial complex $\mc K$ has a \emph{geometric realization} $|\mc K|$, obtained as a union of simplices in Euclidean space whose intersections reflect the intersections among the faces of $\mc K$. This space is well-defined up to homeomorphism. Throughout the paper, we will therefore blur the distinction between a face of $\mc K$ and the corresponding simplex of $|\mc K|$, viewing $\mc K$ as a triangulation of $|\mc K|$.

A \emph{simplicial $(d-1)$-sphere} is a simplicial complex whose geometric realization is homeomorphic to the sphere $\mathbb S^{d-1}$. When no confusion is likely, we will simply call such a complex a \emph{$(d-1)$-sphere}.

\begin{figure}[ht]
    \centering
    \begin{tikzpicture}[scale=2,line join=bevel]
    \begin{scope}[xshift=0cm]
    \coordinate (1) at (0,0);
    \coordinate (2) at (0,1);
    \coordinate (3) at (1,1);
    \coordinate (4) at (1,0);
    \coordinate (5) at (-0.5,-0.5);

    \draw (1) -- (2) -- (3) -- (4) -- cycle;
    \draw (1) -- (5);

    \draw[black] plot[mark=*, mark size=0.5] (1);
    \draw[black] plot[mark=*, mark size=0.5] (2);
    \draw[black] plot[mark=*, mark size=0.5] (3);
    \draw[black] plot[mark=*, mark size=0.5] (4);
    \draw[black] plot[mark=*, mark size=0.5] (5);
    \end{scope}
    
    \begin{scope}[xshift=3.5cm]
    \coordinate (1) at (0,0);
    \coordinate (2) at (1,0);
    \coordinate (3) at (1,1);
    \coordinate (4) at (0,1);
    \coordinate (5) at (-0.5,-0.5);
    \coordinate (v) at (0.5,0.5);

    \draw (1) -- (2) -- (3) -- (4) -- cycle;
    \draw (1) -- (5);
    \draw[fill=blue!20] (v) -- (1) -- (2) -- cycle;
    \draw[fill=cyan!30] (v) -- (2) -- (3) -- cycle;
    \draw[fill=blue!30] (v) -- (3) -- (4) -- cycle;
    \draw[fill=cyan!20] (v) -- (4) -- (1) -- cycle;

    \draw[black] plot[mark=*, mark size=0.5] (1);
    \draw[black] plot[mark=*, mark size=0.5] (2);
    \draw[black] plot[mark=*, mark size=0.5] (3);
    \draw[black] plot[mark=*, mark size=0.5] (4);
    \draw[black] plot[mark=*, mark size=0.5] (5);
    \draw[black] plot[mark=*, mark size=0.5] (v);
    \node[scale=1, left] at (v) {$v$};
    \end{scope}
    \end{tikzpicture}
    \caption{An example of coning: the complex on the right is the cone over the four vertices of the square.}
    \label{fig: cones}
\end{figure}

We say that a simplicial complex $\mc L$ is a \emph{subcomplex} of $\mc K$ if every face of $\mc L$ is a face of $\mc K$. If $U$ is a subset of the vertex set of $\mc K$, then the \emph{induced subcomplex} $\mc K[U]$ is the subcomplex of $\mc K$ with vertex set $U$ whose faces are all the faces of $\mc K$ contained in $U$. The \emph{cone of $\mc K$ over $U$} is the simplicial complex obtained by adjoining a new vertex $v$ (called the \emph{cone point}) to the vertex set of $\mc K$, and whose faces consist of the faces of $\mc K$ together with all faces of the form $F\cup v$, where $F$ is a face of $\mc K[U]$; see Figure~\ref{fig: cones}.

An important special case of coning is \emph{suspension}. The suspension of a simplicial complex $\mc K$ with vertex set $V$ is obtained by coning over $V$ twice. The suspension of a simplicial $(d-1)$-sphere is homeomorphic to $\mathbb S^d$ (see \cite[p.~8]{hatcher}).

\subsubsection{Flag Complexes} 

A simplicial complex is \emph{flag} if all of its minimal non-faces have two vertices; see Figure~\ref{fig: flag.comp} for an example. A set of pairwise adjacent vertices in a graph is a \emph{clique}. An equivalent characterization of the flag property is that every clique in $G(\mc K)$ is a face of $\mc K$.

\begin{figure}[ht]
    \centering
    \begin{tikzpicture}[scale=2,line join=bevel]
    \begin{scope}[xshift=0cm]
    \coordinate (A1) at (1,0,0);
    \coordinate (A2) at (-1,0,0);
    \coordinate (B1) at (0,1,0);
    \coordinate (B2) at (0,-1,0);
    \coordinate (C1) at (0,0,1);
    \coordinate (C2) at (0,0,-1);
    
    \draw (A1) -- (B1) -- (A2) -- (B2) -- cycle;
    \draw (C1) -- (A1);
    \draw (C1) -- (A2);
    \draw (C1) -- (B1);
    \draw (C1) -- (B2);
    \draw[dashed] (C2) -- (A1);
    \draw[dashed] (C2) -- (A2);
    \draw[dashed] (C2) -- (B1);
    \draw[dashed] (C2) -- (B2);
    \end{scope}

    \begin{scope}[xshift=3.5cm]
    \coordinate (B1) at (0,1,0);
    \coordinate (B2) at (0,-1,0);
    \coordinate (A1) at (0,0,0.5);
    \coordinate (A2) at ({-sqrt(3)/2},0,-0.5);
    \coordinate (A3) at ({sqrt(3)/2},0,-0.5);
    
    \draw[color=blue!30] (A3) -- (A1) -- (A2);
    \draw[dashed, color=blue!30] (A2) -- (A3);
    \draw (B1) -- (A1) -- (B2) -- (A2) -- cycle;
    \draw (B1) -- (A3) -- (B2) -- (A2) -- cycle; 
    \end{scope}
    \end{tikzpicture}
    \caption{The boundary of a cross-polytope is flag, whereas the boundary of a triangular bipyramid is not: the blue triangle is a clique but not a face.}
    \label{fig: flag.comp}
\end{figure}

An important feature of flagness is that it is preserved under coning.

\begin{lemma} \label{lem: imp.lem}
Let $\mc K$ be a flag complex with vertex set $V$. The cone of $\mc K$ over any subset $U\subseteq V$ is also flag.
\end{lemma}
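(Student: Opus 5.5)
To prove the lemma, I will use the clique characterization of flagness recalled just above: a complex is flag if and only if every clique in its graph is a face. Let $\mc K'$ denote the cone of $\mc K$ over $U$, with cone point $v$. I take an arbitrary clique $S$ in $G(\mc K')$ and show that $S \in \mc K'$. The first step is to pin down the graph of $\mc K'$. By definition, the faces of $\mc K'$ are the faces of $\mc K$ together with the sets $F \cup v$ with $F \in \mc K[U]$. So the edges of $\mc K'$ are the edges of $\mc K$ together with the edges $uv$ for $u \in U$ (taking $F=\{u\}$). In particular, $v$ is adjacent exactly to the vertices of $U$.

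I will split into two cases according to whether $v \in S$.

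\emph{Case 1: $v \notin S$.} Then $S$ is a clique in $G(\mc K)$, since every edge of $\mc K'$ not involving $v$ is an edge of $\mc K$. Because $\mc K$ is flag, $S \in \mc K \subseteq \mc K'$.

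\emph{Case 2: $v \in S$.} Write $S = T \cup v$. Every vertex of $T$ is adjacent to $v$, so $T \subseteq U$. Also, $T$ is a clique in $G(\mc K)$, so flagness of $\mc K$ gives $T \in \mc K$. Since $T \subseteq U$, this means $T \in \mc K[U]$. Hence $S = T \cup v$ is a face of $\mc K'$ by the definition of the cone. When $T=\emptyset$, the set $S=\{v\}$ is a vertex, so this case is included.

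There is no real obstacle in this argument. The only point needing care is the first step, namely checking that coning adds no edges among the original vertices, so that cliques avoiding $v$ are cliques of $\mc K$. It is also worth remarking that the induced subcomplex $\mc K[U]$ is itself flag, and that this is exactly the fact Case 2 relies on.
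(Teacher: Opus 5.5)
Your proof is correct and follows the paper's argument essentially verbatim: you split on whether the cone point lies in the clique, and use flagness of $\mathcal K$ in both cases. In the second case you observe that the remaining vertices must lie in $U$ because $v$ is adjacent only to $U$, so they form a face of $\mathcal K[U]$.
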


\begin{proof}
Let $v$ be the cone point, and let $A\subseteq V\cup v$ be a clique in the graph of the cone. If $v\notin A$, then $A$ is a clique in $G(\mc K)$, and since $\mc K$ is flag, $A$ is a face of $\mc K$. Hence $A$ is also a face of the cone.

Now suppose that $v\in A$. Then $A\setminus v$ is a clique in $G(\mc K)$, and therefore a face of $\mc K$. Moreover, $A\setminus v\subseteq U$, since $v$ is adjacent only to vertices of $U$. Thus $A\setminus v$ is a face of $\mc K[U]$, so $A$ is a face of the cone by definition.
\end{proof}

\subsection{Cubes}

The \emph{$n$-cube} is the polytope $[0,1]^n$. We refer the reader to \cite{ziegler} for background on polytopes and the combinatorics of cubes. We will only need the following basic fact: every $k$-dimensional face of the $n$-cube is obtained by fixing $n-k$ coordinates to be either $0$ or $1$ and allowing the remaining $k$ coordinates to vary freely. Consequently, every $k$-face is itself a $k$-cube and has $2^k$ vertices.

The graph $Q_n$ of $[0,1]^n$ has vertex set $\{0,1\}^n$, and two vertices $u$ and $v$ are adjacent if they differ in exactly one coordinate. In general, the \emph{distance} $d(u, v)$ between vertices $u, v$ is the number of coordinates in which they differ. Since every $k$-dimensional face of $[0,1]^n$ is itself a $k$-cube, its vertex set induces a subgraph of $Q_n$ isomorphic to $Q_k$. The converse, which appears for example in \cite[Lemma 12]{ehrenborg}, will be useful in Section~\ref{sec: main}. We include a proof for completeness.

\begin{lemma} \label{lem: subcubes}
    If $H$ is a subgraph of $Q_n$ isomorphic to $Q_k$, then $V(H)$ is the set of vertices of some $k$-dimensional face of $[0,1]^n$.
\end{lemma}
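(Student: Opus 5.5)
I will argue by induction on $k$. The cases $k=0$ and $k=1$ are trivial: a single vertex is a $0$-face, and an edge of $Q_n$ joins two vertices differing in exactly one coordinate, so it is a $1$-face. Here "subgraph" means a not necessarily induced subgraph; the argument will show that the vertex set is nonetheless that of a face.

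For $k \geq 2$, fix an isomorphism $\phi: Q_k \to H$. Split $Q_k$ along its last coordinate into two copies $A_0, A_1$ of $Q_{k-1}$, joined by the perfect matching $x \mapsto x + e_k$. By induction, $\phi(A_0)$ is the vertex set of a $(k-1)$-face $F_0$ of $[0,1]^n$, and $\phi(A_1)$ is the vertex set of a $(k-1)$-face $F_1$. Write $F_0$ as the set where coordinates in $S_0$ are free and the others are fixed to a pattern $a$, with $|S_0|=k-1$; define $S_1$ and $b$ for $F_1$ similarly.

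Each matching edge $\phi(x)\phi(x+e_k)$ is an edge of $Q_n$, so it flips a single coordinate $c(x)$. The key step, and the main obstacle, is to show that $c(x)$ is independent of $x$ and that $S_0 = S_1$.

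\begin{itemize}
\item \emph{$c$ is constant.} Take adjacent $x, y$ in $A_0$. Then $x, y, y+e_k, x+e_k$ is a $4$-cycle in $Q_k$, so its image is a $4$-cycle in $Q_n$. A $4$-cycle in $Q_n$ flips two coordinates, each exactly twice, and opposite edges flip the same coordinate. Since the edge $\phi(x)\phi(y)$ flips some coordinate $i$, the edge $\phi(y)\phi(y+e_k)$ flips some $j \neq i$, and the opposite edge $\phi(x)\phi(x+e_k)$ also flips $j$. Hence $c(x) = c(y) = j$. Since $A_0$ is connected, $c \equiv j$ for some fixed coordinate $j$.
\item \emph{$j$ is not free in $F_0$.} If $j \in S_0$, then $\phi(x)+e_j$ would lie in $F_0$. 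By injectivity of $\phi$, distinct $x$ give distinct $\phi(x)+e_j$, so $\phi(A_1) = \phi(A_0)+e_j$ has $2^{k-1}$ elements all lying in $F_0$, hence equals $\phi(A_0)$. This contradicts disjointness.
\end{itemize}

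So $j \notin S_0$. It follows that $F_1 = F_0 + e_j$ is the face with free coordinates $S_0$ and fixed pattern $a$ with coordinate $j$ flipped.

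Therefore $V(H) = \phi(A_0) \cup \phi(A_1)$ is exactly the set of points with coordinates in $S_0 \cup \{j\}$ free and the remaining coordinates fixed to $a$. This is a $k$-dimensional face of $[0,1]^n$, which completes the induction.

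The only nontrivial point is the $4$-cycle argument showing the matching direction is uniform. It relies on the fact that a $4$-cycle in $Q_n$ uses exactly two coordinates, with opposite edges flipping the same coordinate. This holds because the cycle returns to its starting point, so each coordinate is flipped an even number of times; and consecutive edges flip distinct coordinates, since otherwise the cycle would repeat a vertex.
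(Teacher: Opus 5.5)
Your proof is correct, but its key step differs from the paper's. The paper also splits $V(H)$ into two halves $A,B$, each a $(k-1)$-face by induction, but then argues metrically. Since $H\cong Q_k$ has diameter $k$ and each edge of $Q_n$ changes one coordinate, any two vertices of $H$ are at Hamming distance at most $k$. If two coordinates fixed on $A$ were altered somewhere on $B$, one could pick $a\in A$, $b\in B$ with $d(a,b)\ge (k-1)+2$, a contradiction. Hence at most $k$ coordinates vary on $V(H)$, so $V(H)$ lies in a $k$-face, and equality follows from $|V(H)|=2^k$.

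You instead use the local structure of $Q_n$. The $4$-cycle lemma forces the matching between the halves to flip a single coordinate $j$, and injectivity of $\phi$ gives $j\notin S_0$. Then $V(H)=F_0\cup(F_0+e_j)$ is exhibited directly as a $k$-face, with no counting step.

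What each route buys:
\begin{itemize}
\item The paper's argument is shorter. It needs only the diameter of $Q_k$, the partition into two copies of $Q_{k-1}$ (it never tracks which edges join them), and a cardinality comparison. It does, however, apply the induction hypothesis to both halves, since choosing the bad $b\in B$ uses that $B$ is a face.
\item Yours needs the hypothesis only for $A_0$; the data $S_1$ and $b$ you introduce for $F_1$ are never used. It also proves more. Applied to every coordinate direction of $Q_k$, it shows that $\phi$ sends each parallel class of edges of $Q_k$ into a single parallel class of $Q_n$. So $\phi$ itself, not merely its image, is a translate of a coordinate embedding.
\end{itemize}
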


\begin{proof}
    We argue by induction on $k$. The cases $k=0$ and $k=1$ are immediate, so assume $k\geq 2$. The vertex set of $H$ can be partitioned into two sets $A$ and $B$, each of size $2^{k-1}$, such that the induced subgraphs on $A$ and $B$ are both isomorphic to $Q_{k-1}$. By the induction hypothesis, $A$ and $B$ are the vertex sets of $(k-1)$-dimensional faces of $[0,1]^n$. 

    Let $I$ be the set of coordinates that are fixed on $A$. We claim that among the coordinates in $I$, at most one is not fixed in $B$ to the same value as on $A$. Suppose otherwise. Then there exist two coordinates $i,j \in I$ such that $B$ is either free in those coordinates or fixed to the opposite value. Hence we may choose $b \in B$ that differs from the fixed values on $A$ in both coordinates $i$ and $j$.

    Since $A$ is the vertex set of a $(k-1)$-dimensional face, the $k-1$ coordinates not in $I$ vary freely in $A$. Choosing $a \in A$ so that it disagrees with $b$ in each of these varying coordinates, we obtain
    \beq d(a,b) \geq (k-1)+2 = k+1,\eeq
    contradicting the fact that every two vertices of $H$ are at distance at most $k$ in $Q_n$. Indeed, since $H$ is isomorphic to $Q_k$, any pair of vertices in $H$ is connected by a path of length at most $k$, and each edge of $Q_n$ changes exactly one coordinate.

    It follows that the vertices of $H$ vary freely in at most $k$ coordinates, and hence $V(H)$ is contained in the vertex set of a $k$-dimensional face $G$ of $[0,1]^n$. Since both $V(H)$ and $V(G)$ have cardinality $2^k$, $V(H)$ must equal $V(G)$.
\end{proof}

\subsection{Cubical Complexes}

A \emph{cubical complex} $\mc B$ on a vertex set $V$ is a collection of subsets of $V$, partially ordered by inclusion, satisfying the following properties:
\begin{enumerate}
\item $\varnothing\in\mc B$ and ${v}\in\mc B$ for every $v\in V$.
\item For every nonempty face $F\in\mc B$, the interval
\beq[\varnothing,F]=\{G\in\mc B:\varnothing\subseteq G\subseteq F\}\eeq
is isomorphic to the face poset of a cube of some dimension.
\item If $F,G\in\mc B$, then $F\cap G\in\mc B$.
\end{enumerate}

In particular, the collection of vertex sets of faces of an $n$-cube forms a cubical complex in this sense.

The elements of $\mc B$ are called \emph{faces}. If $F\in\mc B$ is nonempty, the \emph{dimension} of $F$ is the dimension of the cube whose face poset is isomorphic to $[\varnothing,F]$. The dimension of $\mc B$ is the largest dimension of any face of $\mc B$. As in the simplicial setting, maximal faces are called \emph{facets}, $0$-dimensional faces are called \emph{vertices}, and $1$-dimensional faces are called \emph{edges}. We denote by $f_k(\mc B)$ the number of $k$-dimensional faces of $\mc B$.

Many constructions for simplicial complexes have cubical analogues. In particular, the \emph{graph} of $\mc B$, denoted $G(\mc B)$, is the graph whose vertices are the vertices of $\mc B$ and whose edges are the $1$-dimensional faces of $\mc B$. 

Every cubical complex admits a geometric realization as a topological space, unique up to homeomorphism. A \emph{cubical $(d-1)$-sphere} is a cubical complex whose geometric realization is homeomorphic to $\mathbb S^{d-1}$.

\subsubsection{Neighborly Cubical Spheres}

A cubical $(d-1)$-sphere with $2^n$ vertices is said to be \emph{neighborly} if its $(\lfloor d/2\rfloor-1)$-skeleton is that of the $n$-cube. For every $d\geq 1$ and $n\geq d$, Babson, Billera, and Chan \cite{babson} constructed neighborly cubical $(d-1)$-spheres with $2^n$ vertices in 1997. Subsequently, Joswig and Ziegler \cite{joswig} gave constructions of such spheres that are realizable as boundaries of convex polytopes.

Let $d \geq 4$, and let $\mc P$ be a neighborly cubical $(d-1)$-sphere having $2^n$ vertices. Label the vertices of $\mc P$ by elements of $\{0,1\}^n$ so that the graph of $\mc P$ coincides with $Q_n$. Any $k$-dimensional face $F$ of $\mc P$ is combinatorially isomorphic to $[0,1]^k$, and hence the subgraph of $Q_n$ induced on $F$ is isomorphic to $Q_k$. By Lemma~\ref{lem: subcubes}, it follows that $F$ is the vertex set of a $k$-face of $[0,1]^n$. Thus $\mc P$ is a subcomplex of $[0,1]^n$, proving the following proposition. 

\begin{prop} \label{prop: cube.subcomp}
    A neighborly cubical $(d-1)$-sphere with $2^n$ vertices is a subcomplex of $[0,1]^n$ whenever $d\geq 4$.
\end{prop}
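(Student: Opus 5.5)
The argument has essentially been sketched in the paragraph preceding the statement; I would make it precise in three steps.

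\textbf{Step 1: label the vertices.} Let $d\geq 4$ and let $\mc P$ be a neighborly cubical $(d-1)$-sphere with $2^n$ vertices. Since $\lfloor d/2\rfloor-1\geq 1$, neighborliness says the $1$-skeleton of $\mc P$ is that of the $n$-cube. So there is a bijection $\phi: V(\mc P)\to\{0,1\}^n$ carrying $G(\mc P)$ isomorphically onto $Q_n$. We identify each vertex with its label. This is the only place the hypothesis $d\geq 4$ is used: for $d\le 3$ neighborliness gives only the $0$-skeleton, and no graph identification is available.

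\textbf{Step 2: show each face is a face of the cube.} Let $F\in\mc P$ be nonempty of dimension $k$. By the definition of a cubical complex, the interval $[\varnothing,F]$ is isomorphic to the face poset of $[0,1]^k$. In particular, the $0$- and $1$-dimensional faces of $\mc P$ contained in $F$ form a graph isomorphic to $Q_k$. These edges are edges of $\mc P$, hence edges of $Q_n$ under our identification. So $F$ spans a subgraph $H$ of $Q_n$ isomorphic to $Q_k$, though not necessarily an induced one; Lemma~\ref{lem: subcubes} only needs a subgraph. By that lemma, $F$ is the vertex set of a $k$-face of $[0,1]^n$.

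\textbf{Step 3: compare the face posets.} The map $F\mapsto F$ sends faces of $\mc P$ to vertex sets of faces of $[0,1]^n$ and preserves inclusion, because both complexes are ordered by inclusion of vertex sets. So $\mc P$ is a subcollection of the face lattice of $[0,1]^n$, closed under taking faces. Closure holds because every face $G\subseteq F$ of $\mc P$ is itself a face of $\mc P$ and so is handled by Step 2. Hence $\mc P$ is a subcomplex of $[0,1]^n$.

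The main point requiring care is in Step 2. We must make sure the edges of $F$, read from its own cube structure, really are edges of $G(\mc P)$ and so correspond to coordinate flips. This holds because faces of faces are faces of $\mc P$: the intervals are subposets of $\mc B$. Once that is checked, Lemma~\ref{lem: subcubes} does the rest.
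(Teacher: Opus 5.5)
Your proposal is correct and follows the paper's argument. You use neighborliness, which needs $d\geq 4$, to identify $G(\mc P)$ with $Q_n$; then you note that each $k$-face spans a copy of $Q_k$ and apply Lemma~\ref{lem: subcubes}. Where the paper says the subgraph induced on $F$ is $Q_k$, you only claim a (possibly non-induced) subgraph, which is all the lemma needs. Your closure-under-faces step just spells out what ``subcomplex'' means.
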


The face numbers of cubical spheres satisfy a version of the Dehn--Sommerville relations \cite{adin}. Combined with the structure of neighborly spheres, these imply that if $\mc P$ is a neighborly cubical $(d-1)$-sphere, then
\begin{equation}\label{eq: facet.bound}
M_1 f_0(\mc P)(\log f_0 (\mc P))^{\lfloor d/2\rfloor-1} \leq f_{d-1}(\mc P) \leq M_2 f_0(\mc P)(\log f_0(\mc P))^{\lfloor d/2\rfloor-1}
\end{equation}
for some constants $M_1, M_2 > 0$ depending only on $d$.

\section{Flag Triangulation of the Cube} \label{sec: cubes}

In this section, we describe a triangulation of the $(d-1)$-cube $[0,1]^{d-1}$ that will play a key role in the construction of the flag spheres $\mc K_n$. To do so, we first introduce some terminology concerning partially ordered sets; see \cite[Chapter~3]{stanley} for a detailed exposition.

Define a partial order on $\{0,1\}^{d-1}$ by declaring $x\leq y$ if $x_i\leq y_i$ for every $i$. The resulting poset is called the \emph{Boolean lattice}.

The faces of $[0,1]^{d-1}$ admit a natural description in terms of this partial order. In the previous section, we observed that the vertex set of any face $G$ is obtained by fixing a subset of coordinates to be $0$ or $1$, while letting the remaining coordinates vary freely. It follows that every face is an interval in this poset; let us briefly explain why. Indeed, its minimal element is obtained by setting all free coordinates to $0$, and its maximal element by setting them to $1$. In particular, $\{0,1\}^{d-1}$ is the interval $[\mathbf 0, \mathbf 1]$, where $\mathbf 0$ and $\mathbf 1$ denote the all-zero and all-one vertices, respectively. Henceforth, we will freely identify faces of the cube with such intervals, without further comment.

A set of elements of a poset is called a \emph{chain} if any pair of elements is comparable, or equivalently, if we can label its elements $x^{(1)}, \ldots, x^{(r)}$ so that 
\beq x^{(1)} \leq \cdots \leq x^{(r)}. \eeq
In this case we say that the chain has \emph{length} $r-1$. A \emph{maximal chain} is a chain that is not contained in any longer chain.

\begin{figure}[ht]
    \centering
    \begin{tikzpicture}[scale=2,line join=bevel]
    \begin{scope}[xshift=0cm, yshift=-0.3cm]
    \coordinate (A1) at (0,0);
    \coordinate (A2) at (0,1);
    \coordinate (A3) at (1,1);
    \coordinate (A4) at (1,0);

    \draw[fill=blue!30] (A1) -- (A4) -- (A3) -- cycle;
    \draw[fill=blue!20] (A1) --(A2) -- (A3) -- cycle;
    \draw[black] plot[mark=*, mark size=0.5] (A1);
    \draw[black] plot[mark=*, mark size=0.5] (A2);
    \draw[black] plot[mark=*, mark size=0.5] (A3);
    \draw[black] plot[mark=*, mark size=0.5] (A4);
    \node[scale=0.8, below left] at (A1) {$00$};
    \node[scale=0.8, above left] at (A2) {$01$};
    \node[scale=0.8, above right] at (A3) {$11$};
    \node[scale=0.8, below right] at (A4) {$10$};
    \end{scope}
    
    \begin{scope}[xshift=3.5cm]
    \coordinate (A1) at (0,0,0);
    \coordinate (A2) at (1.2,0,0);
    \coordinate (A3) at (1.2,0,1);
    \coordinate (A4) at (0,0,1);
    \coordinate (B1) at (0,1,0);
    \coordinate (B2) at (1.2,1,0);
    \coordinate (B3) at (1.2,1,1);
    \coordinate (B4) at (0,1,1);

    \node[scale=0.5, below] at (A1) {$001$};
    \draw (B1) -- (B2) -- (A2) -- (A3) -- (A4) -- (B4) -- cycle;
    \draw (B2) -- (B3) -- (B4);
    \draw (A3) -- (B3);
    \draw[dashed] (B1) -- (A1) -- (A2);
    \draw[dashed] (A1) -- (A4);
    \draw[dashed, fill=purple!30, fill opacity=0.5] (A4) -- (B3) -- (B2) -- cycle;
    \draw[dashed, fill=violet!30, fill opacity=0.5] (A3) -- (A4) -- (B2) -- cycle;
    \draw[fill=cyan!30, fill opacity=0.5] (A3) -- (A4) -- (B3) -- cycle;
    \draw[fill=blue!30, fill opacity=0.5] (A3) -- (B3) -- (B2) -- cycle;
    \draw[black] plot[mark=*, mark size=0.5] (A1);
    \draw[black] plot[mark=*, mark size=0.5] (A2);
    \draw[black] plot[mark=*, mark size=0.5] (A3);
    \draw[black] plot[mark=*, mark size=0.5] (A4);
    \draw[black] plot[mark=*, mark size=0.5] (B1);
    \draw[black] plot[mark=*, mark size=0.5] (B2);
    \draw[black] plot[mark=*, mark size=0.5] (B3);
    \draw[black] plot[mark=*, mark size=0.5] (B4);
    \node[scale=0.5, below right] at (A2) {$101$};
    \node[scale=0.5, below] at (A3) {$100$};
    \node[scale=0.5, below] at (A4) {$000$};
    \node[scale=0.5, above] at (B1) {$011$};
    \node[scale=0.5, above] at (B2) {$111$};
    \node[scale=0.5, above] at (B3) {$110$};
    \node[scale=0.5, above left] at (B4) {$010$};
    \end{scope}
    \end{tikzpicture}
    \caption{The staircase triangulation of the square and the simplex corresponding to the chain $000 \leq 100 \leq 110 \leq 111$ in the staircase triangulation of the $3$-cube.}
    \label{fig: staircase}
\end{figure}

Let $\mc S$ be the simplicial complex with vertex set $\{0,1\}^{d-1}$ whose faces are the chains in the Boolean lattice (so that maximal chains are the facets). This determines a triangulation of the cube (see \cite[Section~6.3.2]{deloera}) known as the \emph{staircase triangulation} (see Figure~\ref{fig: staircase}). Since two vertices of $\mc S$ are adjacent precisely when they are comparable, every clique consists of pairwise comparable elements. Such a set is a chain, and hence a face of $\mc S$. Thus $\mc S$ is flag\footnote{In general, the simplicial complex of all chains of any poset $P$ is a flag complex called the \emph{order complex} of $P$.}.

Denote by $\partial \mc S$ the subcomplex of $\mc S$ consisting of simplices contained in the boundary of the cube. Clearly, $\partial \mc S$ triangulates the boundary of the cube.

\begin{prop} \label{prop: partial.flag}
    The complex $\partial \mc S$ is flag.
\end{prop}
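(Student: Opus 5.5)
Since $\mc S$ is flag, it suffices to show that $\partial \mc S$ is an \emph{induced} subcomplex of $\mc S$, namely $\partial \mc S = \mc S[V(\partial\mc S)]$, once we check which faces lie in the boundary. Take a clique $A$ in $G(\partial\mc S)$. Every edge of $\partial \mc S$ is an edge of $\mc S$, so $A$ is a clique in $G(\mc S)$. By flagness of $\mc S$, $A$ is a chain $x^{(1)}\leq\cdots\leq x^{(r)}$ in the Boolean lattice. It remains to show that the simplex spanned by $A$ lies in the boundary of the cube.

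The key observation is that a chain spans a simplex in the boundary exactly when the whole chain lies in a proper face of the cube. The simplex $\mathrm{conv}(A)$ lies in the interval $[x^{(1)},x^{(r)}]$, which is a face of $[0,1]^{d-1}$. It is a proper face if and only if $x^{(1)}\neq\mathbf 0$ or $x^{(r)}\neq\mathbf 1$. Conversely, if $x^{(1)}=\mathbf 0$ and $x^{(r)}=\mathbf 1$, then the barycenter of the simplex has all coordinates strictly between $0$ and $1$, so the simplex meets the interior. (The barycenter's $i$th coordinate is the fraction of chain elements with $x_i=1$; this is positive because of $\mathbf 1$ and less than $1$ because of $\mathbf 0$.) So the faces of $\partial\mc S$ are exactly the chains that do not contain both $\mathbf 0$ and $\mathbf 1$.

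Now $A$ is a clique in $G(\partial \mc S)$, so each pair of its elements is an edge of $\partial\mc S$. If $A$ contained both $\mathbf 0$ and $\mathbf 1$, then the pair $\{\mathbf 0,\mathbf 1\}$ would be an edge of $\partial\mc S$. But that chain contains both $\mathbf 0$ and $\mathbf 1$, so by the characterization it is not a boundary face; this is the main diagonal of the cube. That is a contradiction (assuming $d-1\geq 1$, so that $\mathbf 0\neq\mathbf 1$). Hence $A$ omits $\mathbf 0$ or $\mathbf 1$, and therefore $A$ is a face of $\partial\mc S$.

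The only step needing care is the characterization of boundary simplices, and in particular the interior-point argument. Once that is in hand, the rest is immediate. An equivalent way to phrase the conclusion is that $\partial \mc S$ is the order complex of the Boolean lattice with the single comparability $\mathbf 0<\mathbf 1$ removed, which is flag by the same clique argument.
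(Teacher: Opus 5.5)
Your proof is correct and is essentially the paper's argument: a clique is a chain by flagness of $\mc S$, it cannot contain both $\mathbf 0$ and $\mathbf 1$, so it lies in a proper face of the cube. Your barycenter computation adds the justification, left implicit in the paper, that $\mathbf 0\mathbf 1$ is not an edge of $\partial\mc S$.

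Two small corrections. First, delete the opening claim that $\partial\mc S$ is an induced subcomplex of $\mc S$. It is false: both complexes have vertex set $\{0,1\}^{d-1}$, while $\mathbf 0\mathbf 1\in\mc S\setminus\partial\mc S$. Your argument never uses it. Second, in the closing remark say ``clique complex of the comparability graph with the edge $\mathbf 0\mathbf 1$ deleted'' rather than ``order complex,'' since removing that one relation destroys transitivity.
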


\begin{proof}
    Let $A$ be a clique in the graph of $\partial \mc S$. Since $\mc S$ is flag, $A$ is a face of $\mc S$. All faces of $\mc S$ are chains in $\{0, 1\}^{d-1}$, so we may enumerate the elements of $A$ as $x^{(1)} \leq \cdots \leq x^{(r)}$.
    
    Moreover, $A$ cannot contain both $\mathbf 0$ and $\mathbf 1$, since these vertices are not adjacent in $\partial \mc S$. If $\mathbf 0 \notin A$, then $x^{(1)} \neq \mathbf 0$, and every element of $A$ lies in the interval $\left[x^{(1)}, \mathbf 1\right]$, which is a facet of $[0,1]^{d-1}$. Similarly, if $\mathbf 1 \notin A$, then $A \subseteq \left[\mathbf 0, x^{(r)}\right]$. Hence $A$ is contained in a facet of the cube and therefore is a face of $\partial \mc S$. Thus $\partial \mc S$ is flag.
\end{proof}

Let $v$ be an interior point of $[0,1]^{d-1}$. Coning over $\partial \mc S$ from $v$, we obtain a triangulation $\mc T$ of $[0,1]^{d-1}$, as illustrated in Figure~\ref{fig: tn}. By Lemma~\ref{lem: imp.lem} and Proposition~\ref{prop: partial.flag}, $\mc T$ is flag. The following lemma is immediate from the definitions.

\begin{lemma} \label{lem: faces.s}
    Let $F$ be a proper face of $[0,1]^{d-1}$. Then the faces of the subcomplex of $\mc T$ induced on $F$ are precisely the chains in the interval of the Boolean lattice corresponding to $F$.
\end{lemma}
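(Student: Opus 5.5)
The plan is to unwind the definitions step by step. By construction, $\mc T$ is the cone over $\partial\mc S$ with cone point $v$. Its faces are therefore of two kinds: the faces of $\partial\mc S$, and sets $A\cup v$ with $A\in\partial\mc S$. Since $F$ is a proper face of the cube, its vertex set is a subset of $\{0,1\}^{d-1}$ that does not contain $v$. Hence the subcomplex $\mc T[F]$ induced on $F$ consists exactly of the faces of $\partial\mc S$ all of whose vertices lie in $F$. It therefore suffices to show that a subset $A\subseteq F$ is a face of $\partial\mc S$ if and only if $A$ is a chain in the Boolean lattice.

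For the forward direction, every face of $\partial\mc S$ is a face of $\mc S$, so it is a chain in $\{0,1\}^{d-1}$. Such a chain, lying inside the interval $F=[a,b]$, is a chain of that interval.

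For the converse, take a chain $A$ in $[a,b]$. Since $A$ is a chain of the whole Boolean lattice, it is a simplex of $\mc S$. Its geometric simplex is the convex hull of points of $F$, so it lies in the face $F$ of the cube, which is contained in the boundary because $F$ is proper. Thus $A\in\partial\mc S$. One could also argue combinatorially, as in Proposition~\ref{prop: partial.flag}: $F$ is contained in some facet $[x,\mathbf 1]$ with $x\neq\mathbf 0$, or in some facet $[\mathbf 0,y]$ with $y\neq\mathbf 1$, and the chains in such a facet are exactly the simplices of $\mc S$ lying in that boundary facet.

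The only point needing care is the meaning of ``simplices contained in the boundary''. We take a simplex of $\mc S$ to lie in the boundary exactly when its vertices all lie in a common facet of the cube. This is automatic for a chain inside $F$, since $F$ lies in some facet. With that convention, the rest is a direct check of definitions.
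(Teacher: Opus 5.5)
Your proposal is correct and matches the paper, which gives no proof and only says the lemma is immediate from the definitions. Your argument is exactly that check: $v$ is excluded because $F$ is proper, the forward direction uses $\partial\mathcal S\subseteq\mathcal S$, and a chain inside $F$ spans a simplex lying in $F\subseteq\partial[0,1]^{d-1}$. Your reading of ``contained in the boundary'' as ``all vertices in a common facet'' is also sound, since a convex subset of a polytope's boundary lies in a single facet.
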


\section{Proof of Theorem~\ref{thm: main.thm}} \label{sec: main}

In this section, we prove Theorem~\ref{thm: main.thm}. Let $\mc T$ be the flag triangulation of $[0,1]^{d-1}$ constructed in the previous section. 

Let $\mc P_n$ be a neighborly cubical $(d-1)$-sphere with $2^n$ vertices. When $d \geq 4$, the graph of $\mc P_n$ is the graph $Q_n$ of $[0,1]^n$. By Proposition~\ref{prop: cube.subcomp}, we may realize $\mc P_n$ as a subcomplex of $[0,1]^n$. In particular, any facet of $\mc P_n$ is a $(d-1)$-dimensional face of the cube, and so its vertices are obtained by fixing $n-(d-1)$ coordinates and allowing the remaining $d-1$ to vary freely. Forgetting the fixed coordinates identifies the vertices of the facet with $\{0,1\}^{d-1}$, thereby allowing us to transfer the triangulation $\mc T$ to each facet. 

\begin{figure}[ht]
    \centering
    \begin{tikzpicture}[scale=2,line join=bevel]
    \begin{scope}[xshift=0cm, yshift=-0.3cm]
    \coordinate (A1) at (0,0);
    \coordinate (A2) at (0,1);
    \coordinate (A3) at (1,1);
    \coordinate (A4) at (1,0);
    \coordinate (B) at (0.5,0.5);

    \draw[fill=blue!20] (A1) -- (A2) -- (B) -- cycle;
    \draw[fill=cyan!30] (A2) -- (A3) -- (B) -- cycle;
    \draw[fill=blue!30] (A3) -- (A4) -- (B) -- cycle;
    \draw[fill=cyan!20] (A4) -- (A1) -- (B) -- cycle;
    \draw[black] plot[mark=*, mark size=0.5] (A1);
    \draw[black] plot[mark=*, mark size=0.5] (A2);
    \draw[black] plot[mark=*, mark size=0.5] (A3);
    \draw[black] plot[mark=*, mark size=0.5] (A4);
    \draw[black] plot[mark=*, mark size=0.5] (B);
    \node[scale=0.8, below left] at (A1) {$00$};
    \node[scale=0.8, above left] at (A2) {$01$};
    \node[scale=0.8, above right] at (A3) {$11$};
    \node[scale=0.8, below right] at (A4) {$10$};
    \node[scale=0.8, left] at (B) {$v$};
    \end{scope}
    
    \begin{scope}[xshift=3.5cm]
    \coordinate (A1) at (0,0,0);
    \coordinate (A2) at (1.2,0,0);
    \coordinate (A3) at (1.2,0,1);
    \coordinate (A4) at (0,0,1);
    \coordinate (B1) at (0,1,0);
    \coordinate (B2) at (1.2,1,0);
    \coordinate (B3) at (1.2,1,1);
    \coordinate (B4) at (0,1,1);
    \coordinate (C) at (0.6,0.5,0.5);

    \draw (B1) -- (B2) -- (A2) -- (A3) -- (A4) -- (B4) -- cycle;
    \draw[dashed] (B1) -- (A1) -- (A2);
    \draw[dashed] (A1) -- (A4);
    \draw[dashed, fill=purple!50, fill opacity=0.5] (C) -- (B2) -- (A2) -- cycle;
    \draw[dashed, fill=violet!50, fill opacity=0.5] (C) -- (A2) -- (A3) -- cycle;
    \draw[dashed, fill=cyan!80, fill opacity=0.5] (C) -- (A3) -- (B2) -- cycle;
    \draw[fill=blue!30, fill opacity=0.5] (B2) -- (A2) -- (A3) -- cycle;
    \draw (B2) -- (B3) -- (B4);
    \draw (A3) -- (B3);
    \draw[black] plot[mark=*, mark size=0.5] (A1);
    \draw[black] plot[mark=*, mark size=0.5] (A2);
    \draw[black] plot[mark=*, mark size=0.5] (A3);
    \draw[black] plot[mark=*, mark size=0.5] (A4);
    \draw[black] plot[mark=*, mark size=0.5] (B1);
    \draw[black] plot[mark=*, mark size=0.5] (B2);
    \draw[black] plot[mark=*, mark size=0.5] (B3);
    \draw[black] plot[mark=*, mark size=0.5] (B4);
    \draw[black] plot[mark=*, mark size=0.5] (C);
    \node[scale=0.5, below] at (A1) {$001$};
    \node[scale=0.5, below right] at (A2) {$101$};
    \node[scale=0.5, below] at (A3) {$100$};
    \node[scale=0.5, below] at (A4) {$000$};
    \node[scale=0.5, above] at (B1) {$011$};
    \node[scale=0.5, above] at (B2) {$111$};
    \node[scale=0.5, above] at (B3) {$110$};
    \node[scale=0.5, above left] at (B4) {$010$};
    \node[scale=0.65, left] at (C) {$v$};
    \end{scope}
    \end{tikzpicture}
    \caption{The triangulation $\mc T$ of the square and a simplex of the triangulation $\mc T$ of the cube.}
    \label{fig: tn}
\end{figure}

This produces a well-defined triangulation $\mc K_n$ of $\mc P_n$. Indeed, it suffices to check compatibility on intersections of facets. For a facet $F$ of $\mc P_n$, let $\mc T(F)$ denote the copy of $\mc T$ used to replace $F$. 

Suppose $F$ and $F'$ are facets of $\mc P_n$ that intersect in a face $G$. By Proposition~\ref{prop: cube.subcomp}, $F$, $F'$ and $G$ are faces of $[0,1]^n$, and hence may be regarded intervals in the Boolean lattice $\{0,1\}^n$. By Lemma~\ref{lem: faces.s}, both $\mc T(F)$ and $\mc T(F')$ induce on $G$ the triangulation whose faces are given by chains in $\{0,1\}^n$ contained in $V(G)$. Thus the induced triangulations of $G$ coincide, and the triangulations on the facets are compatible. 

\begin{lemma} \label{lem: subparts}
    If $x, y \in \{0,1\}^n$ such that $xy$ is an edge of $\mc K_n$, then
    \begin{enumerate}[label={(\roman*)},itemindent=1em]
        \item $x, y \in F$ for some facet $F$ of $\mc P_n$,  \label{lem: infacet}
        \item $x$ and $y$ are comparable, i.e. $x \leq y$ or $y \leq x$. \label{lem: compare}
    \end{enumerate}
\end{lemma}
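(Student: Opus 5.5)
The plan is to read off both parts from how $\mc K_n$ is assembled: it is the union over facets $F$ of $\mc P_n$ of the complexes $\mc T(F)$, glued compatibly along common faces as shown above. Every simplex of $\mc K_n$, and in particular every edge, is therefore a simplex of $\mc T(F)$ for at least one facet $F$. So take an edge $xy$ of $\mc K_n$ with $x,y\in\{0,1\}^n$, meaning neither endpoint is a cone point, and fix a facet $F$ with $xy\in\mc T(F)$.

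For \ref{lem: infacet}, note that the vertex set of $\mc T(F)$ is $V(F)\cup\{v_F\}$, where $v_F$ is the cone point placed inside $F$. Since $x$ and $y$ lie in $\{0,1\}^n$, neither is a cone point, so both belong to $V(F)$. This is all that part (i) needs.

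For \ref{lem: compare}, recall that $\mc T(F)$ is the cone from $v_F$ over the boundary triangulation $\partial\mc S(F)$. Any face of $\mc T(F)$ that avoids $v_F$ is therefore a face of $\partial \mc S(F)$, which is contained in $\mc S(F)$. So $xy$ is a face of the staircase triangulation of $F$. Under the identification of $V(F)$ with $\{0,1\}^{d-1}$, obtained by deleting the $n-(d-1)$ fixed coordinates, faces of $\mc S(F)$ are chains, so $x\le y$ or $y\le x$ in $\{0,1\}^{d-1}$.

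The only real point to check is that this comparability transfers back to $\{0,1\}^n$. It does, because $x$ and $y$ agree on all the fixed coordinates of $F$. Hence $x_i\le y_i$ holds for every coordinate $i$ exactly when it holds for every free coordinate. One can also argue more directly: $xy$ lies in some proper face of $F$, and Lemma~\ref{lem: faces.s} says the faces of $\mc T(F)$ induced on that face are chains in the corresponding interval of $\{0,1\}^n$. I do not expect any step to be a genuine obstacle. The one place needing care is stating clearly that every simplex of $\mc K_n$ comes from a single $\mc T(F)$; this is exactly how the triangulation was defined by gluing.
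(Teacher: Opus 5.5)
Your proof is correct and follows the paper's argument. You place the edge inside a single $\mc T(F)$ and use $V(\mc T(F))=V(F)\cup\{v_F\}$ for (i), then use that faces of $\mc T(F)$ avoiding the cone point are chains for (ii); your explicit check that comparability in $\{0,1\}^{d-1}$ transfers back to $\{0,1\}^n$, because $x$ and $y$ agree on the fixed coordinates of $F$, is a step the paper leaves implicit.
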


\begin{proof}
    If $xy$ is an edge of $\mc K_n$, then $x$ and $y$ lie in some facet of $\mc K_n$. By construction, every facet of $\mc K_n$ is contained in some $\mc T(F)$, where $F$ is a facet of $\mc P_n$. Since $V(\mc T(F))$ consists of the vertices of $F$ together with a single additional coning vertex, and since $x,y \in {0,1}^n$ are not coning vertices, it follows that $x, y \in V(F)$. This proves \ref{lem: infacet}. 

    For \ref{lem: compare}, observe that every face of $\mc T(F)$ whose vertices are all contained in $\{0,1\}^n$ is a chain in $\{0,1\}^n$. Since $xy$ is such a face, $\{x, y\}$ is a chain, and hence either $x \leq y$ or $y \leq x$. 
\end{proof}

\begin{prop} \label{prop: kn.flag}
    The simplicial complex $\mc K_n$ is a flag $(d-1)$-sphere.
\end{prop}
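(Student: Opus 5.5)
The proof has two parts: the topology (that $\mc K_n$ is a $(d-1)$-sphere) and flagness.

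\emph{Sphere.} By construction, $\mc K_n$ is obtained from the cubical sphere $\mc P_n$ by replacing each facet $F$ with the triangulation $\mc T(F)$ of the cube $F$. We have already checked that the triangulations agree on intersections of facets. Hence the geometric realization of $\mc K_n$ is obtained by gluing triangulated cubes exactly as the cubes of $\mc P_n$ are glued, so $|\mc K_n|\cong|\mc P_n|\cong\mathbb S^{d-1}$.

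\emph{Flagness.} Let $A$ be a clique in $G(\mc K_n)$. The goal is to find a single facet $F$ of $\mc P_n$ with $A\subseteq V(\mc T(F))$. Flagness of $\mc T$ (Lemma~\ref{lem: imp.lem} with Proposition~\ref{prop: partial.flag}) then makes $A$ a face of $\mc T(F)\subseteq\mc K_n$. We split into two cases according to whether $A$ contains a cone vertex.

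\emph{Case 1: $A$ contains a cone vertex $v_F$.} Since $v_F$ is adjacent only to vertices of $F$, and distinct cone vertices are never adjacent, every other element of $A$ lies in $V(F)$. So $A\subseteq V(\mc T(F))$ and we are done.

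\emph{Case 2: $A\subseteq\{0,1\}^n$.} By Lemma~\ref{lem: subparts}\ref{lem: compare}, the elements of $A$ are pairwise comparable, so they form a chain $x^{(1)}\le\cdots\le x^{(r)}$. By Lemma~\ref{lem: subparts}\ref{lem: infacet}, the endpoints $x^{(1)},x^{(r)}$ lie in a common facet $F$, which is an interval $[a,b]$ of the Boolean lattice. Then $a\le x^{(1)}\le x^{(i)}\le x^{(r)}\le b$, so the whole chain lies in $F$.

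The main obstacle is that $A$ must be a face of $\mc T(F)$, not merely of $\mc S$: if $A$ contains both the minimum $a$ and the maximum $b$ of $F$, it is not a face of $\partial\mc S$. To handle this, I would use the fact that the edge $x^{(1)}x^{(r)}$ is an edge of $\mc K_n$. It must therefore be an edge of some $\mc T(F')$ with both endpoints in $F'$, and these endpoints are not the bottom and top of $F'$, since those are not adjacent in $\partial\mc S$. So $x^{(1)}$ and $x^{(r)}$ lie in a proper face $G=[x^{(1)},x^{(r)}]$ of $F'$.

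I would therefore choose $F$ to be this facet $F'$ rather than an arbitrary facet containing both endpoints. The chain $A$ then lies in the interval $G$, which is a proper face of $F'$. By Lemma~\ref{lem: faces.s}, chains in the interval corresponding to a proper face are exactly the faces of $\mc T(F')$ induced on that face. Hence $A$ is a face of $\mc T(F')$, and so of $\mc K_n$, completing the proof that $\mc K_n$ is flag.
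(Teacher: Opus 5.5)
Your proof follows the paper's outline: the same two cases, the same use of Lemma~\ref{lem: subparts} to turn a clique of original vertices into a chain whose endpoints share a facet, and the same interval argument. Your sphere argument is also the paper's.

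Where you differ is Case 2. Instead of taking an arbitrary facet containing $x^{(1)},x^{(r)}$ and invoking flagness of $\mc T(F)$, you take the facet $F'$ whose $\mc T(F')$ actually contains the edge $x^{(1)}x^{(r)}$. You note that $[x^{(1)},x^{(r)}]$ is then a proper face of $F'$, and read off $A\in\mc T(F')$ from Lemma~\ref{lem: faces.s}. This is correct and does not need flagness of $\mc T$ at all. It also makes explicit something the paper's final step uses tacitly: ``$A\subseteq V(\mc T(F))$ and $\mc T(F)$ is flag'' only gives $A\in\mc T(F)$ once $A$ is known to be a clique of $G(\mc T(F))$, not merely of $G(\mc K_n)$.

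Your Case 1, however, still ends with exactly that bare step, so by your own Case 2 remark it is missing a line. The original vertices in $A\setminus v_F$ form a chain in $F$ by Lemma~\ref{lem: subparts}, and you must check they are pairwise adjacent in $\mc T(F)$. For a chain in $F$, this means it does not contain both the bottom and the top of $F$. Your Case 2 argument supplies this at once; the case $|A\setminus v_F|\le 1$ is trivial.
\begin{enumerate}
\item If $|A\setminus v_F|\ge 2$, its span $[x^{(1)},x^{(r)}]$ is a proper face of some facet $F'$.
\item So the span cannot equal $F$, since that would give $F\subsetneq F'$ and contradict maximality of $F$.
\item Being an interval inside $F$, the span is therefore a proper face of $F$.
\item By Lemma~\ref{lem: faces.s}, $A\setminus v_F$ is then a face of $\mc T(F)$ not containing $v_F$, i.e.\ a face of the copy of $\partial\mc S$ in $F$.
\item Hence $A$ is a face of the cone $\mc T(F)$.
\end{enumerate}
Equivalently, $G(\mc T(F))$ is an induced subgraph of $G(\mc K_n)$. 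The paper leaves this same fact implicit in both of its cases.
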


\begin{proof}
Let $A$ be a clique in the graph of $\mc K_n$. We show that $A$ is a face of $\mc K_n$.

We first prove that $A \subseteq V(\mc T(F))$ for some facet $F$ of $\mc P_n$. The vertex set of $\mc K_n$ consists of the vertices of $\mc P_n$ together with one cone vertex for each facet of $\mc P_n$. If $A$ contains a cone vertex $v_F$ of some $\mc T(F)$, then by construction $v_F$ is adjacent only to vertices of $F$. Hence $A \subseteq V(\mc T(F))$.

Now suppose that $A$ consists entirely of original vertices of $\mc P_n$. If $x,y$ are adjacent in $G(\mc K_n)$, then by Lemma~\ref{lem: subparts}, item~\ref{lem: compare}, they are comparable. Hence the elements of $A$ are pairwise comparable, so $A$ is a chain. Write
\beq A = \{x^{(1)} \leq \cdots \leq x^{(r)}\}.\eeq
Since $A$ is a clique, $x^{(1)}$ and $x^{(r)}$ are adjacent in $G(\mc K_n)$. By Lemma~\ref{lem: subparts}, item~\ref{lem: infacet}, there exists a facet $F$ of $\mc P_n$ containing both $x^{(1)}$ and $x^{(r)}$. Since $F$ is an interval in the Boolean lattice, it contains the entire interval $\left[x^{(1)}, x^{(r)}\right]$, and hence $A \subseteq F$.

In either case, we obtain $A \subseteq V(\mc T(F))$ for some facet $F$ of $\mc P_n$. Since each $\mc T(F)$ is flag, it follows that $A$ is a face of $\mc T(F)$, and hence also of $\mc K_n$. Therefore $\mc K_n$ is flag.

Moreover, $\mc K_n$ triangulates the $(d-1)$-sphere $\mc P_n$, which implies that $\mc K_n$ is a flag $(d-1)$-sphere.
\end{proof}

Now we can prove our main theorem.

\begin{proof}[Proof of Theorem~\ref{thm: main.thm}]
    By Proposition~\ref{prop: kn.flag}, the complex $\mc K_n$ is a flag $(d-1)$-sphere for every $n \geq 1$. Its vertex set consists of $\mc P_n$ together with a cone vertex for each facet of $\mc P_n$, and hence
    \beq f_0(\mc K_n)=f_0(\mc P_n)+f_{d-1}(\mc P_n). \eeq
    Each cone vertex is adjacent only to original vertices. Hence the set of cone vertices is independent, and $\alpha(G(\mc K_n))\geq f_{d-1}(\mc P_n)$. Therefore
    \begin{equation} \label{eq: ugly}
    \frac{\alpha(G(\mc K_n))}{f_0(\mc K_n)} \geq \frac{f_{d-1}(\mc P_n)}{f_0(\mc P_n)+f_{d-1}(\mc P_n)} = 1-\frac{f_0(\mc P_n)}{f_0(\mc P_n)+f_{d-1}(\mc P_n)} \geq 1-\frac{f_0(\mc P_n)}{f_{d-1}(\mc P_n)}. 
    \end{equation}
    
    Recall that $f_0(\mc P_n)=2^n$, and so, by \eqref{eq: facet.bound}, there exist constants $M_1,M_2>0$ such that 
    \beq M_1n^{\lfloor d/2 \rfloor-1}2^n\leq f_{d-1}(\mc P_n)\leq M_2n^{\lfloor d/2 \rfloor-1}2^n. \eeq 
    The upper bound implies $f_0(\mc K_n)\leq 2^n(1+M_2n^{\lfloor d/2 \rfloor-1})\leq e^n$ for sufficiently large $n$, and hence $\log f_0(\mc K_n) \leq n$. Combining this with the lower bound on $f_{d-1}(\mc P_n)$ gives
    \beq \frac{f_0(\mc P_n)}{f_{d-1}(\mc P_n)} \leq \frac{1}{M_1n^{\lfloor d/2 \rfloor-1}} \leq \frac{1}{M_1(\log f_0(\mc K_n))^{\lfloor d/2 \rfloor-1}}. \eeq
    Substituting into \eqref{eq: ugly} and multiplying through by $f_0(\mc K_n)$ gives
    \beq \alpha(G(\mc K_n)) \geq f_0(\mc K_n)-\frac{Cf_0(\mc K_n)}{(\log f_0(\mc K_n))^{\lfloor d/2 \rfloor-1}},\eeq
    where $C=1/M_1$.
\end{proof}

\begin{ack}
The author thanks Isabella Novik for her invaluable guidance throughout this project and for her many helpful comments and suggestions on the manuscript. The author also thanks Eran Nevo for suggesting the use of staircase triangulations to strengthen the previous version of Theorem~\ref{thm: main.thm} in higher dimensions. The author was partially supported by a graduate fellowship from NSF grant DMS-2246399.
\end{ack}

\bibliographystyle{alpha}
\bibliography{refs.bib}
\end{document}